\newtheorem{Lemma}{Lemma}[section]
\newtheorem{Proposition}[Lemma]{Proposition}
\newtheorem{Theorem}[Lemma]{Theorem}
\theoremstyle{definition}
\newtheorem{Definition}[Lemma]{Definition}
\begin{document}

\title{On two letter identities in Lie rings}

\author{Boris Baranov}
\address{Laboratory of Continuous Mathematical Education (School 564 of St. Petersburg), nab. Obvodnogo kanala 143, Saint Petersburg, Russia.}\email{BBBOOORRRIIISSS@mail.ru}

\author{Sergei O. Ivanov}
\address{Laboratory of Modern algebra and Applications, St. Petersburg State University, 14th Line, 29b,
Saint Petersburg, 199178 Russia.} \email{ivanov.s.o.1986@gmail.com}

\author{Savelii Novikov}
\address{Laboratory of Continuous Mathematical Education (School 564 of St. Petersburg), nab. Obvodnogo kanala 143, Saint Petersburg, Russia.}
\email{novikov.savelii00@gmail.com}
\begin{abstract} Let $L=L(a,b)$ be a free Lie ring on two letters $a,b.$  We investigate the kernel $I$ of the map $L\oplus L \to L $ given by $(A,B)\mapsto [A,a]+[B,b].$ Any homogeneous element of $L$ of degree $\geq 2$ can be presented as $[A,a]+[B,b].$ Then $I$ measures how far such a presentation from being unique. Elements of $I$  can be interpreted as identities $[A(a,b),a]=[B(a,b),b]$ in Lie rings.  The kernel $I$ can be decomposed into a direct sum $I=\bigoplus_{n,m} I_{n,m},$ where elements of $I_{n,m}$ correspond to identities on commutators of weight $n+m,$ where the letter $a$ occurs $n$ times and the letter $b$ occurs $m$ times. We give a full description of $I_{2,m};$ describe the rank of $I_{3,m};$ and present a concrete non-trivial element in $I_{3,3n}$ for $n\geq 1.$   
\end{abstract}
\maketitle
\section*{Introduction}
It is easy to check that the following identity is satisfied in any Lie ring (=Lie algebra over $\mathbb Z$)
\begin{equation}\label{1e}
[a,b,b,a]=[a,b,a,b],
\end{equation} 
where $[x_1,\dots,x_n]$ is the left-normed bracket of elements $x_1,\dots, x_n$ defined by recursion  $[x_1,\dots,x_{n}]:=[[x_1,\dots, x_{n-1}],x_n].$  We denote by $[a,_i b]$ the Engel brackets of $a,b:$
$$[a,_0b]=a, \hspace{1cm} [a,_{i+1}b]=[[a,_ib],b].$$ For example, $[a,_3b]=[a,b,b,b].$
In \cite{Qbad} the second author together with Roman Mikhailov generalized the identity \eqref{1e} as follows
\begin{equation}\label{eq_Qbad_identities}
[[a,_{2n}b],a]= \left[\ \sum_{i=0}^{n-1}(-1)^i [[a,_{2n-1-i} b],[a,_{i}b]]\ ,\  b \ \right],
\end{equation}
where $n\geq 1$. This identity is crucial in their proof that the wedge of two circles $S^1\vee S^1$ is a $\mathbb Q$-bad space in sense of Bousfield-Kan. Note that  the letter $a$ occurs twice in each commutator of this identity and the letter $b$ occurs $2n$ times. Moreover, the identity has the form $[A,a]=[B,b].$ 

We are interested in identities of the form: 
\begin{equation}\label{2}
[A(a,b),a]=[B(a,b),b],
\end{equation}
where $A$ and $B$ are some expressions on  letters $a$ and $b$. These identities can be interpreted as an equalities in the free Lie ring $L=L(a,b).$ Note that a  description of all identities of such kind would give a full description of the intersection $[L,a] \cap [L,b].$
Consider a $\mathbb Z$-linear map
\[
\Theta:L \oplus L \to L,
\] 
\[
\Theta(A,B)=[A,a]+[B,b].
\]
Then the problem of describing identities of type \eqref{2} can be formalised as the problem of describing $$I:={\rm Ker}(\Theta).$$ 
Any homogeneous element of $L$ of degree $\geq 2$ can be presented as $[A,a]+[B,b].$ So $I$ measures how far this presentation from being unique.    
The problem of describing of $I$ is different from the problem formulated on the formal language of identities, because $A,B$ here are not just formal expressions but they are elements of the free Lie ring. For example, the identity $[[b,b],a]=[[a,a],b]$ is not interesting for us because $([b,b],-[a,a])=(0,0)$ in $I.$ This work is devoted to the study of $I.$

The Lie ring $L$ has a natural grading by the weight of a commutator: $L=\bigoplus_{n\geq 1} L_n.$ Moreover, $L_n=\bigoplus_{k+m=n}L_{k,m}$, where $L_{k,m}\subseteq L_{k+m}$ is an abelian group generated by multiple commutators with $k$ letters $a$ and $m$ letters $b.$ We can consider the following restrictions of the map $\Theta$  
$$\Theta_n:L_{n-1}\oplus L_{n-1} \to L_n, \hspace{1cm}\Theta_{k,l}:L_{k-1,l}\oplus L_{k,l-1} \to L_{k,l}$$ and set  $I_n={\rm Ker}(\Theta_n)$ and $I_{k,l}={\rm Ker}(\Theta_{k,l}).$ It is easy to check that 
$$I=\bigoplus_{n\geq 1} I_n, \hspace{1cm} I_n=\bigoplus_{k+l=n} I_{k,l}.$$
The main results of the paper are the full description of $I_{2,n};$ the description of the rank of the free abelian group $I_{3,n};$ and the description of a concrete series of elements from $I_{3,3n}$ for any $n\geq 1.$ 

The rank of a free abelian group $X$ is called ``dimension of $X$'' in this paper and it is denoted by ${\rm dim}\, X.$  It well known that the dimension of $L_n$ can be computed by the Necklace polynomial 
$$ {\rm dim}\, L_n=\frac{1}{n} \sum_{d\mid n} \mu \left(\frac{n}{d}\right)2^d,$$
where $\mu$ is the Mobius function. 
$$\begin{array}{c|c|c|c|c|c|c|c|c|c|c|c|c|c|}
n &1 &2 &3 & 4 & 5 & 6 & 7 & 8 & 9 & 10 & 11 & 12 & 13 \\ \hline
{\rm dim}\, L_n  & 2 & 1 & 2 & 3 & 6 & 9 & 18 & 30 & 56 & 99 & 186 & 335 & 630 \\ \hline
\end{array}$$
Since the map $\Theta_n$ is an epimorphsm, we obtain 
$${\rm dim}\, I_n=2 \cdot {\rm dim}\ L_{n-1}-{\rm dim}\ L_n.$$
We prove the following 
$$
\dim\, I_{2,m}=
\begin{cases} 
0,\, \mbox{if } m \mbox{ is odd} \\
1,\, \mbox{if } m \mbox{ is even} 
\end{cases}, \hspace{1cm} \dim\, I_{3,m}=\left\lfloor \frac{m+1}{2} \right\rfloor - \left\lfloor \frac{m-1}{3} \right\rfloor - 1.
$$
(Proposition \ref{dimI2n}, Proposition \ref{dimI3m}). For $n\leq 13$ we obtain the following table for dimensions. 
$$\begin{array}{c|c|c|c|c|c|c|c|c|c|c|c|c|c|}
n &2 &3 & 4 & 5 & 6 & 7 & 8 & 9 & 10 & 11 & 12 & 13 \\ \hline
{\rm dim}\, I_{2,n-2}  &  0 & 0 & 1 & 0 & 1 & 0 & 1 & 0 & 1 & 0 & 1 & 0  \\ \hline
{\rm dim}\, I_{3,n-3}  &  0 & 0 & 0 & 0 & 1 & 0 & 1 & 1 & 1 & 1 & 2 & 1  \\ \hline
{\rm dim}\, I_n  &  3 & 0 & 1 & 0 & 3 & 0 & 6 & 4 & 13 & 12 & 37 & 40  \\ \hline
\end{array}$$

The computation of ${\rm dim}\, I_{2,m}$ shows that there are no any other elements in $I_{2,m}$ except those that come from the identities \eqref{eq_Qbad_identities}. In particular, all non-trivial identities corresponding to elements of $I_{2,n}$ have even weight. 

Note that $I_n=0$ for odd $n<9.$ This can be interpreted as the fact that there is no a non-trivial identity of the type $[A,a]=[B,b]$ on two letters of odd weight lesser than $9.$ However, we have found a non-trivial identity of this type of weight $9$ (Theorem \ref{th_I36}). If we set 
$$C_n=[a,_nb],$$
then the following identity of weight $9$ holds in any Lie ring
\begin{equation}\label{eq_36}
\big[2[C_5,C_1]+5[C_4,C_2],\ a\ \big]=\big[  2[C_4,C_1,C_0]+3[C_3,C_2,C_0]-2[C_3,C_1,C_1]+[C_2,C_1,C_2], \ b \ \big].
\end{equation} 

The main result of this paper is a concrete series of identities that correspond to non-trivial elements in $I_{3,3n}$ that generalise the identity \eqref{eq_36} (Theorem \ref{3aId}). Namely, for any $n\geq 1$, the following identity is satisfied in $L_{3,3n}$.
\[
\left[\sum_{k=0}^{\left\lfloor\frac{n+1}{2}\right\rfloor}(-1)^{n+1}\alpha_{n+1-k, k}[C_{2n+1-k}, C_{n+k-1}]\ ,\ a\right] = \left[\sum_{i=0}^{n}\sum_{j=0}^{\left\lfloor\frac{i}{2}\right\rfloor}(-1)^{i+1}\alpha_{i-j, j}[C_{n+i-j}, C_{n+j-1},C_{n-i}]\ ,\ b\right],
\]
where $\alpha_{0,0}=1$ and $\alpha_{i,j}=2\binom{i+j-1}{j}+\binom{i+j-2}{j-1}-\binom{i+j-2}{j-2}-2\binom{i+j-1}{j-2}$ for $i,j\geq 0$ and $(i,j)\ne (0,0)$. For $n=2$ we obtain the identity \eqref{eq_36}. For $n=1$ we obtain the identity
$$
[\ 3[a,b,b, [a,b]]+ 2[a,b,b,a]\ ,\ a]\ =\ [\ -[a,b,a,[a,b]] + 2[a,b,b,a,a]\ ,\ b]
$$
that holds in any Lie ring. 

\section{Identities corresponding to elements in $I_{2,m}$}

\begin{Definition} If $w$ is a Lyndon word, we denote by  $[w]$ the corresponding element of the Lyndon-Shirshov basis of the free Lie algebra $L$ (see \cite{Reutenauer}). If $w$ is a letter, then $[w]=w.$ If $w$ is not a letter then $w$ has a standard factorisation $w=uv$ and $[w]$ is defined by recursion $[w]=[[u],[v]].$ 
For example,  $[a]=a$ and $[ab^n]=[[ab^{n-1}],b]=C_n$.
\end{Definition}
\begin{Lemma}\label{basL2n}
The following set is a basis of $L_{2,n}$ with $n\in\mathbb{N}$.
\[
\left\{ \,[C_k, C_l] \;|\; k>l,\: k+l=n \;\;k,l,m\in\mathbb{N} \,\right\}.
\]
\end{Lemma}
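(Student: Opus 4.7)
The plan is to recognize the proposed set as the Lyndon--Shirshov basis of $L_{2,n}$, up to overall signs. Recall that with $a<b$, the family $\{[w]\mid w\text{ Lyndon}\}$ forms a $\mathbb Z$-basis of $L$ that is refined by multi-degree, so it suffices to identify the Lyndon words of multi-degree $(2,n)$ and their standard bracketings.

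First I would enumerate such Lyndon words. Any Lyndon word on $\{a,b\}$ must begin with the smaller letter $a$, so a Lyndon word of multi-degree $(2,n)$ has the form $w=ab^{\alpha_1}ab^{\alpha_2}$ with $\alpha_1+\alpha_2=n$ and $\alpha_1,\alpha_2\ge 0$. All proper suffixes beginning with $b$ automatically exceed $w$, so Lyndon-ness reduces to the comparison with $v=ab^{\alpha_2}$, the suffix obtained by dropping the first $\alpha_1+1$ letters. A direct lexicographic check shows $w<v$ iff $\alpha_1<\alpha_2$; when $\alpha_1\ge\alpha_2$, the word $v$ is a proper prefix of $w$ and is therefore strictly less. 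Hence $w$ is Lyndon exactly when $\alpha_1<\alpha_2$.

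Next I would compute the standard factorization. Every proper suffix of $w$ other than $b$ and $v$ has the form $b^sab^{\alpha_2}$ with $s\ge 1$; comparing such a word with its one-letter shift $b^{s-1}ab^{\alpha_2}$ shows it fails the Lyndon inequality (the interior $a$ appears one position earlier in the shift). Thus $v$ is the longest proper Lyndon suffix, the standard factorization is $w=(ab^{\alpha_1})\cdot(ab^{\alpha_2})$, and
\[
[w]=\bigl[[ab^{\alpha_1}],[ab^{\alpha_2}]\bigr]=[C_{\alpha_1},C_{\alpha_2}].
\]

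Finally, by the Lyndon--Shirshov theorem \cite{Reutenauer}, the set $\{[C_l,C_k]\mid 0\le l<k,\ l+k=n\}$ is a $\mathbb Z$-basis of $L_{2,n}$. Antisymmetry $[C_l,C_k]=-[C_k,C_l]$ converts this into $\{-[C_k,C_l]\mid k>l,\ k+l=n\}$, so the set in the lemma differs only by overall signs and is therefore also a $\mathbb Z$-basis. The only delicate steps are the short lex comparisons in the enumeration and the factorization; these are mechanical, so no substantial obstacle is expected.
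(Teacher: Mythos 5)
Your proof is correct and follows essentially the same route as the paper: identify the Lyndon words of multidegree $(2,n)$ as $ab^{\alpha_1}ab^{\alpha_2}$ with $\alpha_1<\alpha_2$, compute the standard bracketing $[C_{\alpha_1},C_{\alpha_2}]$, and pass to the stated set via antisymmetry. You simply carry out the lexicographic checks and the standard factorization in more detail than the paper does.
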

\begin{proof}
The intersection of the Lyndon-Shirshov basis with $L_{k,m}$ is a basis of $L_{k,m}.$  The basis of $L_{2,m}$ consists of commutators of Lyndon words with $2$ letters ``$a$'' and $m$ letters ``$b$''. 
\[
[ab^lab^k]=[[ab^l],[ab^k]]=-[[ab^l],[ab^k]]=-[C_k,C_l].
\]
Word $ab^lab^k$ is a Lyndon word only when $k>l$. The assertion follows. 
\end{proof}

\begin{Lemma}\label{dim2n}
For any $n\in\mathbb{N}$ the following is satisfied:
\[
\dim L_{2,n}=\left\lceil \frac{n}{2} \right\rceil = \left\lfloor \frac{n+1}{2}\right\rfloor.
\]
\end{Lemma}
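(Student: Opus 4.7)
The plan is to deduce the dimension formula directly from Lemma \ref{basL2n} by counting the basis elements it exhibits. That lemma identifies a basis of $L_{2,n}$ with the set of brackets $[C_k,C_l]$ satisfying $k+l=n$ and $k>l$, where $l\geq 0$ is admitted (the case $l=0$ corresponding to the Lyndon word $aab^n$ via $C_0=a$). Thus the proof reduces to counting pairs of nonnegative integers $(k,l)$ with $k+l=n$ and $k>l$.

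I would parametrize these pairs by the smaller index $l$. The two conditions $k+l=n$ and $k>l$ are together equivalent to $0\leq l<n/2$. Splitting on the parity of $n$, for $n=2m$ the admissible values are $l\in\{0,1,\dots,m-1\}$, giving $m=n/2$ pairs; for $n=2m+1$ the admissible values are $l\in\{0,1,\dots,m\}$, giving $m+1=(n+1)/2$ pairs. In both cases the count equals $\lceil n/2\rceil$, and this agrees with $\lfloor (n+1)/2 \rfloor$ by a standard integer-rounding identity.

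The argument is entirely elementary: essentially all of the nontrivial content has been absorbed into Lemma \ref{basL2n}, which locates the Lyndon-Shirshov basis in the $(2,n)$-bigraded piece. There is no real obstacle; the only sanity check worth performing is that the case $l=0$ really contributes (which it does, since $aab^n$ is a Lyndon word for every $n\geq 1$, corresponding to $[C_n,C_0]=[C_n,a]$), so that no off-by-one error creeps in.
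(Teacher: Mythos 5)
Your proposal is correct and follows essentially the same route as the paper: both deduce the formula by counting the basis elements $[C_k,C_l]$ with $k+l=n$, $k>l\geq 0$ supplied by Lemma \ref{basL2n}, differing only in whether one parametrizes by the smaller index or counts all $n+1$ compositions and discards the non-Lyndon half. Your explicit check that the $l=0$ case contributes is a sensible precaution, but there is nothing substantively new here.
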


\begin{proof}
Consider the basis from lemma \ref{basL2n}. Hence $L_{2,n}=
\langle \,\{ [C_{n_1},C_{n_2}] \, | \, n_1,n_2\in\mathbb{N}_0, n_1 > n_2 \mbox{ and }  {n_1}+{n_2}=n \} \, \rangle$. Total number of words with $2$ letters $a$ and $n$ letters $b$ starting with $a$ is $n+1$. However, in our case $n_1>n_2$. Hence for odd $n$ number of such commutators is $\frac{n+1}{2}$ and for even $n$ it is $\frac{n}{2}$. 
\end{proof}

\begin{Proposition}\label{dimI2n}
For any $m\in\mathbb{N}$ we have
\[
\dim\, I_{2,m}=
\begin{cases} 
0,\, \mbox{if } m \mbox{ is odd} \\
1,\, \mbox{if } m \mbox{ is even}. 
\end{cases}
\]
\end{Proposition}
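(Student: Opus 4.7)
The approach is a dimension count via rank--nullity. First, I would note that the bigraded restriction $\Theta_{2,m}\colon L_{1,m}\oplus L_{2,m-1}\to L_{2,m}$ is surjective. This follows from the surjectivity of $\Theta_n$ recalled in the introduction combined with the fact that $\Theta$ respects the $(a,b)$-bigrading (the summand $[A,a]$ raises the $a$-degree by one and $[B,b]$ raises the $b$-degree by one), so that $\Theta$ restricts to an epimorphism on each bigraded piece $\Theta_{k,l}$ with $k+l\geq 2$. Rank--nullity applied to this restriction yields
\[
\dim I_{2,m}=\dim L_{1,m}+\dim L_{2,m-1}-\dim L_{2,m}.
\]

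Next I would evaluate the three dimensions. Clearly $\dim L_{1,m}=1$, with $L_{1,m}$ free abelian on $C_m$. Lemma \ref{dim2n} gives $\dim L_{2,m}=\lfloor (m+1)/2\rfloor$ and $\dim L_{2,m-1}=\lfloor m/2\rfloor$ (with the edge case $L_{2,0}=0$ also fitting the formula). Substituting,
\[
\dim I_{2,m}=1+\left\lfloor\tfrac{m}{2}\right\rfloor-\left\lfloor\tfrac{m+1}{2}\right\rfloor.
\]

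A case split on the parity of $m$ finishes the proof: when $m$ is even both floors equal $m/2$ and $\dim I_{2,m}=1$; when $m$ is odd the second floor exceeds the first by exactly one and $\dim I_{2,m}=0$. There is no real obstacle because all substantive content was packaged into Lemma \ref{dim2n}; this proposition is essentially a corollary. Note that the argument is purely numerical and does not exhibit a generator of $I_{2,m}$ for even $m$, which matches the remark in the introduction that such a generator is supplied by the identity \eqref{eq_Qbad_identities}.
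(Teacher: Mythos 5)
Your proof is correct and follows essentially the same route as the paper: surjectivity of $\Theta_{2,m}$ plus rank--nullity gives $\dim I_{2,m}=1+\lfloor m/2\rfloor-\lfloor(m+1)/2\rfloor$ via Lemma \ref{dim2n}, and a parity check finishes. Your explicit justification that $\Theta$ restricts to an epimorphism on each bigraded piece is a small improvement in rigor over the paper, which uses this fact without comment.
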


\begin{proof}
By definition, $I_{2,m}={\rm Ker}\, \Theta_{2,m}$. Then $\dim I_{2,m}=\dim (\ker\Theta_{2,m})=\dim (L_{1,m} \oplus L_{2,m-1})-\dim (\mathrm{Im} \, \Theta_{2,m})= \dim (L_{1,m} \oplus L_{2,m-1})-\dim L_{2,m}=\dim L_{1,m}+\dim L_{2,m-1}-\dim L_{2,m}=1+\dim L_{2,m-1}-\dim L_{2,m}=1+\left\lfloor \frac{m}{2}\right\rfloor -
\left\lfloor \frac{m+1}{2}\right\rfloor$ (see lemma \ref{dim2n}). Let $m$ be even, then $\dim I_{2,m}=1+ \frac{m}{2} - \left\lfloor \frac{m}{2} -\frac{1}{2}\right\rfloor=1+\frac{m}{2}-\frac{m}{2}=1$. Consider the case of odd $m$. Then $\dim I_{2,m} = 1 + \left\lceil \frac{m-1}{2} \right\rceil - \frac{m+1}{2}=1+\frac{m}{2} -\frac{1}{2}-\frac{m}{2}-\frac{1}{2}=0$.

\end{proof}

\begin{Theorem}
For any $m\in\mathbb{N}$ the following is satisfied
\[
I_{2,m}=
\begin{cases}
0,\, \mbox{if } m \mbox{ is odd} \\
\langle\, (\, C_m ,\, \sum_{i=1}^{\frac{m}{2}}(-1)^i [C_{m-i}, C_{i-1}]\, ) \, \rangle,\, \mbox{if }  m \mbox{ is even}.
\end{cases}
\]	 
\end{Theorem}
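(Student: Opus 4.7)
The plan is to split into the two parity cases and invoke Proposition~\ref{dimI2n} in each, only needing to produce an explicit generator when $m$ is even.

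For odd $m$, Proposition~\ref{dimI2n} gives $\dim I_{2,m}=0$. Since $I_{2,m}$ is a subgroup of the free abelian group $L_{1,m}\oplus L_{2,m-1}$, it is itself free abelian of rank $0$, so $I_{2,m}=0$.

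For even $m=2n$, first I would check that the proposed element actually lies in $I_{2,m}$. Applying $\Theta_{2,m}$ to $\bigl(C_m,\ \sum_{i=1}^{m/2}(-1)^i[C_{m-i},C_{i-1}]\bigr)$ gives
\[
[C_m,a]+\left[\sum_{i=1}^{m/2}(-1)^i[C_{m-i},C_{i-1}],\ b\right].
\]
Reindexing via $j=i-1$ turns the bracketed sum into $-\sum_{j=0}^{n-1}(-1)^j[C_{2n-1-j},C_j]$, and the identity \eqref{eq_Qbad_identities} of Mikhailov--Ivanov asserts precisely
\[
[C_{2n},a]=\left[\sum_{j=0}^{n-1}(-1)^j[C_{2n-1-j},C_j],\ b\right],
\]
so the sum vanishes and the element lies in $I_{2,m}$.

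It then remains to show this element \emph{generates} $I_{2,m}$, not just sits in it. Proposition~\ref{dimI2n} says $I_{2,m}$ is free abelian of rank~$1$. The key observation is that $L_{1,m}$ is the rank-$1$ free abelian group with basis $\{C_m\}$ (this is the $k=1$ case of Lemma~\ref{basL2n}'s analogue; there is a unique Lyndon word $ab^m$). Hence the projection $p_1\colon I_{2,m}\to L_{1,m}=\mathbb{Z}\,C_m$ to the first coordinate takes any generator $(A,B)$ to $A=\varepsilon\,C_m$ for some integer $\varepsilon$. Since our element has first coordinate exactly $C_m$, it must be $\pm 1$ times a generator, and so generates $I_{2,m}$ itself.

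The only point requiring any care is the reindexing that reduces the claim to the already-proved identity \eqref{eq_Qbad_identities}; everything else is formal. The primitivity step, which might at first look like the main obstacle, is immediate once one notes that $L_{1,m}$ has rank~$1$.
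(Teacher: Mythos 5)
Your proof is correct, but it takes a genuinely different route from the paper's in the even case. The paper works entirely from scratch: it writes a general element of $L_{1,m}\oplus L_{2,m-1}$ in the Lyndon--Shirshov basis, applies $\Theta_{2,m}$ using the rewriting rule $[[ab^nab^m],b]=[[ab^{n+1}],[ab^m]]+[ab^nab^{m+1}]$, and solves the resulting linear system, showing that the coefficient $\alpha$ of $C_m$ forces all the $\alpha_i$ in turn; this simultaneously produces the generator and re-derives the identity \eqref{eq_Qbad_identities} as a byproduct. You instead take \eqref{eq_Qbad_identities} as known (it is only cited, not proved, in this paper --- it comes from \cite{Qbad}), use it after a correct reindexing to verify membership of the proposed element in $I_{2,m}$, and then close the gap between ``lies in a rank-one group'' and ``generates it'' with a primitivity argument: any generator of $I_{2,m}$ projects to $\varepsilon C_m$ in $L_{1,m}=\mathbb{Z}C_m$, and since your element projects exactly to $C_m$ the scalar relating them must be a unit. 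That last step is the one place a naive rank argument would fail (rank~$1$ alone does not rule out being a proper multiple of the generator), and you handle it correctly. The trade-off: your argument is shorter and cleaner but leans on the external identity from \cite{Qbad}, whereas the paper's computation is self-contained and constitutes an independent proof of that identity; both arguments rely on the dimension count of Proposition \ref{dimI2n} (explicitly in your case, implicitly via the uniqueness of the solved coefficients in the paper's).
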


\begin{proof}
Triviality of the kernel for odd $m$ can be easily proven using lemma \ref{dim2n}. Consider the case when $m$ is even. Then basis of $L_{1,m}$ consists of one element $[ab^m]$, i.e. $L_{1,m}=\left\{ \alpha[ab^m] \; | \; \alpha\in\mathbb{Z} \right\}$. Basis of $L_{2,m-1}$ consists of $\left\lfloor \frac{m}{2}\right\rfloor$ elements (according to lemma \ref{dim2n}). Because $m$ is even $\left\lfloor \frac{m}{2}\right\rfloor=\frac{m}{2}$. Hence the following equality is true
\[
L_{2,m-1}=\left\{ \alpha_1[aab^{m-1}]+\alpha_2[abab^{m-2}]+\dots+\alpha_{\frac{m}{2}}[ab^{\frac{m}{2}-1}ab^{m-\frac{m}{2}}] \; | \; \alpha_1,\alpha_2,\ldots,\alpha_{\frac{m}{2}}\in\mathbb{Z} \right\}.
\] 
By definition, $I_{2,m}=\ker \Theta_{2,m}$. We can apply map $\Theta_{2,m}$ to arbitrary element of $L_{1,m}\oplus L_{2,m-1}$ that is expressed as basis elements and equate the obtained to zero. Jacobi identity implies the following
\[
[[ab^nab^m], b]=[[ab^{n+1}],[ab^m]]+[ab^nab^{m+1}].
\] 
We can use this equality to transform an image of an element from $L_{2,m-1}$. 
\[
\Theta_{2,m}\left(\alpha[ab^m], \sum_{i=1}^{\frac{m}{2}}\alpha_i [ab^{i-1}ab^{m-i}]\right)=\alpha[[ab^m],a] + \sum_{i=1}^{\frac{m}{2}}\alpha_i [[ab^{i-1}ab^{m-i}], b]=
\]
\[
=-\alpha[a,[ab^m]]+\sum_{i=1}^{\frac{m}{2}}\alpha_i \left([[ab^{i}],[ab^{m-i}]]+[ab^{i-1}ab^{m-i+1}]\right)=
\]
For all $i\not = \frac{m}{2}$ commutator $[[ab^{i}],[ab^{m-i}]]=[ab^{i}ab^{m-i}]$. Then the sum can be rewritten as follows
\[
= -\alpha[aab^m]+\alpha_1[abab^{m-1}]+\alpha_1[aab^m]+\alpha_2[ab^2ab^{m-2}]+\alpha_2[abab^{m-1}]+\dots+\alpha_{i-1}[ab^{i-1}ab^{m-i+1}]+
\]
\[
+\alpha_{i-1}[[ab^{i-2}ab^{m-i+2}]]+\alpha_i[ab^{i}ab^{m-i}]+\alpha_i[ab^{i-1}ab^{m-i+1}]+\alpha_{i+1}[ab^{i+1}ab^{m-i+1}]+\alpha_{i+1}[ab^{i}ab^{m-i}]+\dots+
\]
\[
+\alpha_{\frac{m}{2}}[[ab^{\frac{m}{2}}],[ab^{m-\frac{m}{2}}]]+\alpha_{\frac{m}{2}}[ab^{\frac{m}{2}-1}ab^{m-\frac{m}{2}+1}]=0.
\]
Last but one element of sum is equals to $0$ because $\alpha_{\frac{m}{2}}[[ab^{\frac{m}{2}}],[ab^{m-\frac{m}{2}}]]=
\alpha_{\frac{m}{2}}[[ab^{\frac{m}{2}}],[ab^{\frac{m}{2}}]]=0$. It is easy to see that for equality we need such coefficients $\alpha$ and $\alpha_i$ that terms of the sum will be reduced. Let $\alpha=1$, hence $\alpha_1=1$ because we need 
$-\alpha[aab^m]$ and $\alpha_1[aab^m]$ to be reduced. Other coefficients can be obtained similarly. Commutators with  coefficients $\alpha_i$ and $\alpha_{i+1}$ will be reduced. Hence $\ker \Theta_{2,m}$ is generated by element $[ab^m]=C_m$ and sum 
$[aab^{m-1}]-[abab^{m-2}]+\dots\mp[ab^{\frac{m}{2}-1}ab^{m-\frac{m}{2}+1}]\pm[ab^{\frac{m}{2}}ab^{m-\frac{m}{2}}]=\sum_{i=1}^{\frac{m}{2}}(-1)^{i+1}[ab^{i-1}ab^{m-i}]=$ 
$=\sum_{i=1}^{\frac{m}{2}}(-1)^{i+1}[[ab^{i-1}],[ab^{m-i}]]=\sum_{i=1}^{\frac{m}{2}}(-1)^{i}[[ab^{m-i}],[ab^{i-1}]]=\sum_{i=1}^{\frac{m}{2}}(-1)^{i}[C_{m-i},C_{i-1}] $   
\end{proof}

\section{Identities corresponding to elements in $I_{3,m}$}
\begin{Lemma}\label{basisL3n}
For any $n\in\mathbb{N}$ the following set is a basis of $L_{3,n}$.
\[
\left\{ \,[C_k, C_l, C_m] \;|\; k>l,\,k\geq m, \, k+l+m=n \;\;k,l,m\in\mathbb{N}_0 \,\right\}.
\]
\end{Lemma}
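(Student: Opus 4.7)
The plan is to identify the proposed set with the intersection of the Lyndon--Shirshov basis of $L$ and $L_{3,n}$, repeating the strategy of Lemma \ref{basL2n}. I would first characterise the relevant Lyndon words: they have the shape $w = ab^{i_1}ab^{i_2}ab^{i_3}$ with $i_1+i_2+i_3 = n$ and $i_j \geq 0$. Comparing $w$ lexicographically with its only two proper suffixes that begin with $a$, namely $ab^{i_2}ab^{i_3}$ and $ab^{i_3}$, one obtains the characterisation: $w$ is Lyndon if and only if $i_1 \leq i_2$ and $i_1 < i_3$.

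Next I would compute $[w]$ via the standard factorisation $w = uv$ with $v$ the longest proper Lyndon suffix, splitting into two regimes according to whether the suffix $ab^{i_2}ab^{i_3}$ is itself Lyndon. If $i_2 < i_3$ it is, so $(u,v) = (ab^{i_1},\, ab^{i_2}ab^{i_3})$, and Lemma \ref{basL2n} together with antisymmetry yield $[w] = [C_{i_3},C_{i_2},C_{i_1}]$. If $i_2 \geq i_3$, then $ab^{i_2}ab^{i_3}$ is not Lyndon (either because the exponents are strictly decreasing, or because for $i_2 = i_3$ it is an imprimitive square), so $v = ab^{i_3}$ and $u = ab^{i_1}ab^{i_2}$, which is Lyndon since $i_1 < i_3 \leq i_2$ forces $i_1 < i_2$; this produces $[w] = -[C_{i_2},C_{i_1},C_{i_3}]$. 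In both formulas the triple $(k,l,m)$ that appears satisfies $k > l$ and $k \geq m$.

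Finally I would check that the resulting two partial maps $(i_1,i_2,i_3) \mapsto (i_3,i_2,i_1)$ on $\{i_2 < i_3\}$ and $(i_1,i_2,i_3) \mapsto (i_2,i_1,i_3)$ on $\{i_2 \geq i_3\}$ together constitute a bijection onto $\{(k,l,m) : k > l,\ k \geq m,\ k+l+m = n\}$. The inverse is built by branching on whether $l \geq m$ (then $(i_1,i_2,i_3) = (m,l,k)$, coming from the first formula) or $l < m$ (then $(i_1,i_2,i_3) = (l,k,m)$, coming from the second); in particular the boundary cases $l = m$ and $k = m$ land cleanly in exactly one of the two formulas. Since the change of basis from Lyndon--Shirshov is then diagonal up to signs, the proposed set is a basis of $L_{3,n}$.

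The main obstacle I anticipate is the case analysis itself: correctly identifying the longest Lyndon suffix in each regime and verifying that the equality boundaries $i_1 = i_2$ and $i_2 = i_3$ are absorbed by the two formulas without overlap or omission, so that the induced map on parameter sets is indeed a bijection. Beyond that, no Lie-algebraic input is needed apart from antisymmetry and Lemma \ref{basL2n}.
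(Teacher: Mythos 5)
Your proposal is correct and takes essentially the same route as the paper: characterise the Lyndon words $ab^{i_1}ab^{i_2}ab^{i_3}$ by $i_1\leq i_2$ and $i_1<i_3$, split on whether $i_2<i_3$ to determine the standard factorisation, and rewrite the two resulting brackets as $\pm[C_k,C_l,C_m]$ with $k>l$, $k\geq m$. The paper's version is terser (it does not spell out the bijection on index triples that you verify at the end), but the case analysis and conclusions coincide.
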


\begin{proof}
Lyndon words commutators of length $n+3$ with $3$ letters ``a'' and $n$ letters ``b'' construct the basis of $L_{3, n}$. It is easy to prove that $ab^{i}ab^{j}ab^{t}$ is a Lyndon word if and only if $i\leq j$ and $i < t$, where $i,j,t\in\mathbb{N}_0$. Consider two cases:
\begin{enumerate}
\item $j < t$ then $[ab^iab^jab^t]=[[ab^i],[ab^jab^t]]=[[ab^i],[[ab^j],[ab^t]]]=[[ab^t],[ab^j],[ab^i]]=[C_t,C_j,C_i]$. Take $t=k, \: j=l, \: i=m$ then $k > l, \: k>m$ and $l\geq m$.
\item $j\geq t$ then $[ab^iab^jab^t]=[[ab^iab^j],[ab^t]]=[[ab^i],[ab^j],[ab^t]]=[C_i ,C_j,C_t]=-[C_j ,C_i,C_t]$.
Take $j=k, \: u=l, \: t=m$ then $k \geq m, \: l\leq k$ and $m>l$. Hence $k\geq m>l$, so $k>l$.
\end{enumerate}
If we unite conditions of both cases, we get $k>l$ and $k\geq m$ for arbitrary $k,l,m\in\mathbb{N}_0$. 
\end{proof}

\subsection{Generalized identity with three letters ``a''}

\begin{Lemma}\label{recur}
For expression $\alpha_{i,j}=2\binom{i+j-1}{j}+\binom{i+j-2}{j-1}-\binom{i+j-2}{j-2}-2\binom{i+j-1}{j-2}$, where $i,j \in \mathbb{N}_0$ the following conditions are satisfied: 
\begin{enumerate}
\item[1)] $\alpha_{i-1,j}+\alpha_{i,j-1}=\alpha_{i,j}$, when $i \ne j$ and $j\ne 0$
\item[2)] $\alpha_{i,i-1}=\alpha_{i,i}$, when $i\ge2$
\item[3)] $\alpha_{i,o}=2$, when $i\geq1$
\end{enumerate}
\end{Lemma}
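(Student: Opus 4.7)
All three assertions are purely algebraic statements about binomial coefficients, so the plan is to verify each by direct expansion, adopting the standard convention $\binom{n}{k}=0$ for $k<0$ and applying Pascal's rule $\binom{n-1}{k}+\binom{n-1}{k-1}=\binom{n}{k}$ termwise.

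Identity (3) is immediate: substituting $j=0$ into the defining formula kills the three binomials with negative lower argument, and only $2\binom{i-1}{0}=2$ survives. For (1), I would write out
\[
\alpha_{i-1,j}=2\binom{i+j-2}{j}+\binom{i+j-3}{j-1}-\binom{i+j-3}{j-2}-2\binom{i+j-2}{j-2},
\]
\[
\alpha_{i,j-1}=2\binom{i+j-2}{j-1}+\binom{i+j-3}{j-2}-\binom{i+j-3}{j-3}-2\binom{i+j-2}{j-3},
\]
and group the resulting eight binomials into four pairs, one pair contributing to each of the four terms of $\alpha_{i,j}$. Each pair collapses under Pascal's rule (after one interior cancellation among the middle $\pm\binom{i+j-3}{j-2}$ terms) to exactly the required binomial in $\alpha_{i,j}$. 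This bookkeeping never uses $i\neq j$ or $j\neq 0$: as an algebraic identity, (1) holds unconditionally, and the hypotheses in the statement are simply those convenient for later applications.

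For (2), I exploit this unconditional form: specialising (1) to $j=i$ gives $\alpha_{i,i}=\alpha_{i-1,i}+\alpha_{i,i-1}$, so it suffices to prove $\alpha_{i-1,i}=0$. Expanding,
\[
\alpha_{i-1,i}=2\binom{2i-2}{i}+\binom{2i-3}{i-1}-\binom{2i-3}{i-2}-2\binom{2i-2}{i-2},
\]
and the diagonal symmetries $\binom{2i-2}{i}=\binom{2i-2}{i-2}$ and $\binom{2i-3}{i-1}=\binom{2i-3}{i-2}$ make the four terms cancel in two pairs.

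The proof is almost entirely mechanical; the only observation requiring any thought is that the restriction $i\neq j$ in (1) is an artefact of intended use rather than of the algebra, so (1) can be invoked on the diagonal $j=i$ to reduce (2) to the single vanishing statement $\alpha_{i-1,i}=0$. Once that reduction is made, everything else is routine Pascal-rule and symmetry bookkeeping.
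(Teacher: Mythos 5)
Your proposal is correct and follows essentially the same route as the paper: Pascal's rule termwise for (1), the symmetry $\binom{n}{k}=\binom{n}{n-k}$ for the cancellation in (2), and direct substitution for (3). Your derivation of (2) as ``(1) on the diagonal plus $\alpha_{i-1,i}=0$'' is just a cleaner packaging of the paper's computation, whose leftover four-term expression is literally $\alpha_{i-1,i}$.
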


\begin{proof}
We can use the recurrence relation for binomial coefficient to prove the first condition:
\[
\alpha_{i-1,j}+\alpha_{i,j-1}= 2\binom{i+j-2}{j}+\binom{i+j-3}{j-1}-\binom{i+j-3}{j-2}-2\binom{i+j-2}{j-2} +
\]
\[ +\:  2\binom{i+j-2}{j-1}+\binom{i+j-3}{j-2}-\binom{i+j-3}{j-3}-2\binom{i+j-2}{j-3} 
= 2 \left(\binom{i+j-2}{j} + \binom{i+j-2}{j-1}\right) +
\]
\[+\left(\binom{i+j-3}{j-1} + \binom{i+j-3}{j-2}\right) -  \left(\binom{i+j-3}{j-2}+ \binom{i+j-3}{j-3}\right) - 2\left(\binom{i+j-2}{j-2} + \binom{i+j-2}{j-3}\right) = 
\]
\[
=2\binom{i+j-1}{j}+\binom{i+j-2}{j-1}-\binom{i+j-2}{j-2}-2\binom{i+j-1}{j-2}=\alpha_{i,j}.
\]
To prove the second condition we can substitute $j=i$ into $\alpha_{i, j}$ and express each term using recurrence relation for binomial coefficient:
\[
\alpha_{i,i}=2\binom{2i-1}{i}+\binom{2i-2}{i-1}-\binom{2i-2}{i-2}-2\binom{2i-1}{i-2}=
\]
\[
=\underline{2\binom{2i-2}{i-1}}+2\binom{2i-2}{i}+\underline{\binom{2i-3}{i-2}}+\binom{2i-3}{i-1}-\underline{\binom{2i-3}{i-3}}-\binom{2i-3}{i-2}-\underline{2\binom{2i-2}{i-3}}-2\binom{2i-2}{i-2}=
\]
\[
=\alpha_{i,i-1}+2\binom{2i-2}{i}+\binom{2i-3}{i-1}-\binom{2i-3}{i-2}-2\binom{2i-2}{i-2}.
\]
All we need to prove now is that $2\binom{2i-2}{i}+\binom{2i-3}{i-1}-\binom{2i-3}{i-2}-2\binom{2i-2}{i-2}=0$. Using symmetric property of binomial coefficient, i.e. $\binom{n}{k}=\binom{n}{n-k}$, all terms will be reduced:
\[
2\binom{2i-2}{i}+\binom{2i-3}{i-1}-\binom{2i-3}{i-2}-2\binom{2i-2}{i-2}=\binom{2i-3}{i-2}+2\binom{2i-2}{i-2}-\binom{2i-3}{i-2}-2\binom{2i-2}{i-2}=0.
\]
Consider the case, when $j=0$. We need to mention that for $k<0$ binomial coefficient $\binom{n}{k}=0$. Then the expression will be as follows.
\[
\alpha_{i,0}= 2\binom{i-1}{0}+\binom{i-2}{-1}-\binom{i-2}{-2}-2\binom{i-1}{-2}=2\binom{i-1}{0}=2.
\]
\end{proof}
\begin{Theorem}\label{3aId}
For any $n\in\mathbb{N}$, the following identity is satisfied in $L_{3,3n}$.
\[
\left[\sum_{k=0}^{\left\lfloor\frac{n+1}{2}\right\rfloor}(-1)^{n+1}\alpha_{n+1-k, k}[C_{2n+1-k}, C_{n+k-1}], a\right] = \left[\sum_{i=0}^{n}\sum_{j=0}^{\left\lfloor\frac{i}{2}\right\rfloor}(-1)^{i+1}\alpha_{i-j, j}[C_{n+i-j}, C_{n+j-1},C_{n-i}], b\right],
\]
where $\alpha_{0,0}=1$ and $\alpha_{i,j}=2\binom{i+j-1}{j}+\binom{i+j-2}{j-1}-\binom{i+j-2}{j-2}-2\binom{i+j-1}{j-2}$, where $i,j \in\mathbb{N}$.
\end{Theorem}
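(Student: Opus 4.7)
The plan is to verify the identity by expanding both sides in the Lyndon--Shirshov basis of $L_{3,3n}$ from Lemma~\ref{basisL3n}, namely $\{[C_P,C_Q,C_R] : P>Q,\ P\ge R\}$, and then comparing the coefficient of each basis element. Since $a=C_0$, the left-hand side is already in basis form: every summand satisfies $[[C_k,C_l],a]=[C_k,C_l,C_0]$ by the definition of the left-normed bracket, and the range bound $k\le\lfloor(n+1)/2\rfloor$ is exactly the condition $2n+1-k>n+k-1$ needed to land in the basis.

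For the right-hand side, the key tool is the Jacobi-derived ``derivation'' formula
\[
[[C_p,C_q,C_r],b] = [C_{p+1},C_q,C_r]+[C_p,C_{q+1},C_r]+[C_p,C_q,C_{r+1}],
\]
which follows from $[C_r,b]=C_{r+1}$. After applying this to each summand, I would fix a basis element $[C_P,C_Q,C_R]$ and collect its total coefficient from the three resulting families. A short substitution shows that, whenever all three pieces lie inside the summation range, the coefficient equals
\[
(-1)^{n-R+1}\bigl(\alpha_{u-1,v}+\alpha_{u,v-1}-\alpha_{u,v}\bigr),\qquad u=2n-R-Q,\ v=Q-n+1,
\]
which vanishes by Lemma~\ref{recur}(1) provided $u\ne v$ and $v\ne 0$.

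The remaining work is a finite boundary analysis, which I expect to be the main obstacle. Four cases must be treated separately: (i) $R=0$, where the third family is out of range and the surviving coefficient $(-1)^{n+1}\alpha_{u,v}$ matches the LHS; (ii) $v=0$ (i.e.\ $Q=n-1$), where the second family drops out and Lemma~\ref{recur}(3) supplies $\alpha_{i,0}=2$; (iii) $u=v$, where Lemma~\ref{recur}(2) gives $\alpha_{i,i-1}=\alpha_{i,i}$ while the auxiliary identity $\alpha_{i-1,i}=0$, immediate from the symmetry $\binom{2u-2}{u}=\binom{2u-2}{u-2}$ and $\binom{2u-3}{u-1}=\binom{2u-3}{u-2}$ in the defining formula for $\alpha$, supplies the missing half of the recursion; and (iv) the exceptional pair $(i,j)=(0,0)$ in the third family, which is the only source of a non-basis element, namely $[C_n,C_{n-1},C_{n+1}]$, and must be rewritten via Jacobi as $[C_{n+1},C_{n-1},C_n]-[C_{n+1},C_n,C_{n-1}]$ before comparison. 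The delicate point is verifying that the correction from (iv) exactly cancels the residual contributions left over by (ii) at $(P,Q,R)=(n+1,n-1,n)$ and by (iii) at $(n+1,n,n-1)$; once this bookkeeping is complete, every $R>0$ coefficient cancels and the $R=0$ coefficients reproduce $(-1)^{n+1}\alpha_{n+1-k,k}$, matching the left-hand side.
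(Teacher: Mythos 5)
Your proposal is correct, and it takes a genuinely different route from the paper's. The paper never expands the right-hand side into the Lyndon--Shirshov basis and compares coefficients; instead it proves, by induction on $k\le n$, the telescoping identity
\[
\Bigl[\sum_{i=0}^{k}\sum_{j=0}^{\lfloor i/2\rfloor}(-1)^{i+1}\alpha_{i-j,j}[C_{n+i-j},C_{n+j-1},C_{n-i}],\,b\Bigr]
=\sum_{t=0}^{\lfloor (k+1)/2\rfloor}(-1)^{k+1}\alpha_{k+1-t,t}[C_{n+k+1-t},C_{n-1+t},C_{n-k}],
\]
so that each new layer $i=k+1$ of the double sum absorbs the previous remainder, and at $k=n$ the remainder is the left-hand side because $C_0=a$. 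Both arguments ultimately rest on the same three properties of $\alpha_{i,j}$ from Lemma~\ref{recur}, and your key computation is right: with $u=2n-R-Q$, $v=Q-n+1$ the three families contribute $(-1)^{n-R+1}(\alpha_{u-1,v}+\alpha_{u,v-1}-\alpha_{u,v})$; the basis constraint $P>Q$ forces $v\le u$, so the only family that can fall out of the summation range $j\le\lfloor i/2\rfloor$ is the first one, and exactly when $u=v$ (where, as you observe, the formally missing term $\alpha_{u-1,u}$ vanishes anyway, so Lemma~\ref{recur}(2) closes the case). I also checked your ``delicate point'': the Jacobi rewriting of the single non-basis term $[C_n,C_{n-1},C_{n+1}]$, carrying coefficient $-\alpha_{0,0}=-1$, contributes $-1$ to $[C_{n+1},C_{n-1},C_n]$ and $+1$ to $[C_{n+1},C_n,C_{n-1}]$, which cancel the residues $+1$ (from $\alpha_{0,0}-\alpha_{1,0}$ at $u=1$, $v=0$) and $-1$ (from $\alpha_{1,0}-\alpha_{1,1}$ at $u=v=1$) respectively; for $n=1$ the second of these points has $R=0$ and the correction instead lifts the family-two contribution $2$ to the required $\alpha_{1,1}=3$. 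As for what each approach buys: yours is a finite, mechanically checkable coefficient comparison whose only cost is the boundary bookkeeping you enumerate, whereas the paper's induction packages that same bookkeeping into a one-step cancellation at the price of having to guess the closed form of the remainder $\theta_k$ --- which is really the creative content of its proof.
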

\begin{proof} Consider $n, k\in\mathbb{N}$. Lets prove the following identity for any $k\leq n$:
\[
\left[\sum_{i=0}^{k}\sum_{j=0}^{\left\lfloor\frac{i}{2}\right\rfloor}(-1)^{i+1}\alpha_{i-j, j}[C_{n+i-j}, C_{n+j-1},C_{n-i}], b\right]=\sum_{t=0}^{\left\lfloor\frac{k+1}{2}\right\rfloor}(-1)^{k+1}\alpha_{k+1-t, t}[C_{n+k+1-t}, C_{n-1+t}, C_{n-k}].
\]
Denote the left part as $\omega_k$ and the right part as $\theta_k$. We will prove this equality using mathematical induction with variable $k$. \\
\par \fbox{1} We can expand the sum and use lemma \ref{recur}
\[\omega_{1}=\left[-\alpha_{0,0}[C_{n},C_{n-1},C_{n}]+\alpha_{1,0}[C_{n+1},C_{n-1},C_{n-1}],b\right]=-1[C_{n},C_{n-1},C_{n},b]+2[C_{n+1},C_{n-1},C_{n-1}, b]=
\]
\[
=-1[C_{n+1},C_{n-1},C_{n}]+1[C_{n+1},C_{n},C_{n-1}]+2[C_{n+1},C_{n-1},C_{n}]+2[C_{n+2},C_{n-1},C_{n-1}]+2[C_{n+1},C_{n-1},C_{n-1}]=
\]
\[
=2[C_{n+2},C_{n-1},C_{n-1}]+3[C_{n},C_{n-1},C_{n}]=\alpha_{2,0}[C_{n+2},C_{n-1},C_{n-1}]+\alpha_{1,1}[C_{n},C_{n-1},C_{n}]=\theta_1
\]
\par\fbox{2} We need to prove that $\omega_{k}=\theta_{k}$ implies $\omega_{k+1}=\theta_{k+1}$.
We can expand $\omega_{k+1}$ as a sum of $\omega_{k}$ and the last element of the first sum in $\omega_{k+1}$:
\[
\omega_{k+1}=\underbrace{\omega_{k}}_{\theta_{k}}+\left[\sum_{j=0}^{\left\lfloor\frac{k+1}{2}\right\rfloor}(-1)^{k+2}\alpha_{k+1-j, j}[C_{n+k+1-j}, C_{n+j-1},C_{n-k-1}],b\right]
\]
By expanding the commutator as follows $[C_{n+k+1-j}, C_{n+j-1},C_{n-k-1},b]=[C_{n+k+2-j}, C_{n+j-1},C_{n-k-1},b]+[C_{n+k+1-j}, C_{n+j},C_{n-k-1},b]+[C_{n+k+1-j}, C_{n+j-1},C_{n-k},b] $, we can express the second term as three different sums. One of them will be reduced by $\theta_{k}$.
\[
\omega_{k+1}=\sum_{t=0}^{\left\lfloor\frac{k+1}{2}\right\rfloor}(-1)^{k+1}\alpha_{k+1-t, t}[C_{n+k+1-t}, C_{n-1+t}, C_{n-k}]+\sum_{j=0}^{\left\lfloor\frac{k+1}{2}\right\rfloor}(-1)^{k+2}\alpha_{k+1-j, j}[C_{n+k+2-j}, C_{n+j-1},C_{n-k-1}]+
\]
\[
+\sum_{j=0}^{\left\lfloor\frac{k+1}{2}\right\rfloor}(-1)^{k+2}\alpha_{k+1-j, j}[C_{n+k+1-j}, C_{n+j},C_{n-k-1}]+\sum_{j=0}^{\left\lfloor\frac{k+1}{2}\right\rfloor}(-1)^{k+2}\alpha_{k+1-j, j}[C_{n+k+1-j}, C_{n+j-1},C_{n-k}]=
\]
\[
=\sum_{j=0}^{\left\lfloor\frac{k+1}{2}\right\rfloor}(-1)^{k+2}\alpha_{k+1-j, j}[C_{n+k+2-j}, C_{n+j-1},C_{n-k-1}]+\sum_{j=0}^{\left\lfloor\frac{k+1}{2}\right\rfloor}(-1)^{k+2}\alpha_{k+1-j, j}[C_{n+k+1-j}, C_{n+j},C_{n-k-1}]
\]
Denote commutators in sums as $a_j$ and $b_j$ correspondingly. We can show that for any $j\geq 0$ it is satisfied that $a_{j+1}=b_j$.
\[
a_{j+1}=[C_{n+k+2-j-1}, C_{n+j+1-1},C_{n-k-1}]=[C_{n+k+1-j}, C_{n+j},C_{n-k-1}]=b_j.
\]
Because of that, the expression can be written as a sum of $a_0$, $b_{\left\lfloor\frac{k+1}{2}\right\rfloor}$ with corresponding coefficients and one sum on index $j$ with summed coefficients. 
\[
\omega_{k+1}=\sum_{j=0}^{\left\lfloor\frac{k+1}{2}\right\rfloor-1}(-1)^{k+2}(\alpha_{k-j, j+1} + \alpha_{k+1-j, j})[C_{n+k+1-j}, C_{n+j},C_{n-k-1}] +(-1)^{k+2}\alpha_{k+1,0}[C_{n+k+2}, C_{n-1}, C_{n-k-1}]-
\]
\[
+(-1)^{k+2}\alpha_{k+1-\left\lfloor\frac{k+1}{2}\right\rfloor, \left\lfloor\frac{k+1}{2}\right\rfloor}[C_{n+k+1-\left\lfloor\frac{k+1}{2}\right\rfloor},C_{n+\left\lfloor\frac{k+1}{2}\right\rfloor},C_{n-k-1}].
\]
Coefficients of commutators, in obtained sum on index $j$, can be transformed using the first case of lemma \ref{recur}. Also, we can change index of sum by subtracting $1$ from it. Coefficient of $a_0$ can be rewritten using the third case of lemma \ref{recur}. Then $b_{\left\lfloor\frac{k+1}{2}\right\rfloor}$ will become a zero element of sum on index $j$. 
\[
\omega_{k+1}=\sum_{j=1}^{\left\lfloor\frac{k+1}{2}\right\rfloor}(-1)^{k+2}\alpha_{k+2-j, j}[C_{n+k+2-j}, C_{n-1+j},C_{n-k-1}] +(-1)^{k+2}\alpha_{k+2,0}[C_{n+k+2}, C_{n-1}, C_{n-k-1}]+
\]
\[
+(-1)^{k+2}\alpha_{k+1-\left\lfloor\frac{k+1}{2}\right\rfloor, \left\lfloor\frac{k+1}{2}\right\rfloor}[C_{n+k+1-\left\lfloor\frac{k+1}{2}\right\rfloor},C_{n+\left\lfloor\frac{k+1}{2}\right\rfloor},C_{n-k-1}]=
\]
\[
=\sum_{j=0}^{\left\lfloor\frac{k+1}{2}\right\rfloor}(-1)^{k+2}\alpha_{k+2-j, j}[C_{n+k+2-j}, C_{n-1+j},C_{n-k-1}] + (-1)^{k+2}\alpha_{k+1-\left\lfloor\frac{k+1}{2}\right\rfloor, \left\lfloor\frac{k+1}{2}\right\rfloor}b_{\left\lfloor\frac{k+1}{2}\right\rfloor}.
\]
\noindent Consider two cases:

\noindent \textbf{1)} $k$ is odd. Then $\left\lfloor\frac{k+2}{2}\right\rfloor=\left\lfloor\frac{k+1}{2} + \frac{1}{2}\right\rfloor=\frac{k+1}{2}+\left\lfloor\frac{1}{2}\right\rfloor=\frac{k+1}{2}=\left\lfloor\frac{k+1}{2}\right\rfloor$, hence $b_{\left\lfloor\frac{k+1}{2}\right\rfloor}=0$. It is true because $b_{\left\lfloor\frac{k+1}{2}\right\rfloor}=[C_{n+k+1-\frac{k+1}{2}},C_{n+\frac{k+1}{2}},C_{n-k-1}]=[0,C_{n-k-1}]=0$. Consequently $\omega_{k+1}$ can be expressed as follows.
\[
\omega_{k+1}=\sum_{j=0}^{\left\lfloor\frac{k+2}{2}\right\rfloor}(-1)^{k+2}\alpha_{k+2-j, j}[C_{n+k+2-j}, C_{n-1+j},C_{n-k-1}]= \theta_{k+1}.
\] 

\noindent\textbf{2)} $k$ is even. Then $\left\lfloor\frac{k+2}{2}\right\rfloor=\frac{k+2}{2}=\left\lfloor\frac{k}{2}\right\rfloor + 1=\left\lfloor\frac{k+1}{2}\right\rfloor + 1$, hence $\alpha_{k+2-\left\lfloor\frac{k+2}{2}\right\rfloor, \left\lfloor\frac{k+2}{2}\right\rfloor}=\alpha_{\left\lfloor\frac{k+1}{2}\right\rfloor + 1, \left\lfloor\frac{k+1}{2}\right\rfloor}$ because of the second case of lemma \ref{recur}. It is important to mention that $\left\lfloor\frac{k+1}{2}\right\rfloor = \left\lfloor\frac{k}{2} + \frac{1}{2}\right\rfloor=\frac{k}{2}$. Hence $\alpha_{k+1-\left\lfloor\frac{k+1}{2}\right\rfloor, \left\lfloor\frac{k+1}{2}\right\rfloor} = \alpha_{\frac{k}{2}+1, \left\lfloor\frac{k+1}{2}\right\rfloor}=\alpha_{\left\lfloor\frac{k+1}{2}\right\rfloor + 1, \left\lfloor\frac{k+1}{2}\right\rfloor}$. We can express $\theta_{k+1}$ as a sum up to $\left\lfloor\frac{k+2}{2}\right\rfloor - 1 = \left\lfloor\frac{k+1}{2}\right\rfloor$ and the last addendum with $j=\left\lfloor\frac{k+2}{2}\right\rfloor=\left\lfloor\frac{k+1}{2}\right\rfloor + 1$:
\[
\theta_{k+1}=\sum_{j=0}^{\left\lfloor\frac{k+1}{2}\right\rfloor}(-1)^{k+2}\alpha_{k+2-j, j}[C_{n+k+2-j}, C_{n-1+j},C_{n-k-1}] +
\] 
\[
+ (-1)^{k+2}\alpha_{k+2-\left\lfloor\frac{k+2}{2}\right\rfloor, \left\lfloor\frac{k+2}{2}\right\rfloor}[C_{n+k+2-\left\lfloor\frac{k+1}{2}\right\rfloor- 1},C_{n-1+\left\lfloor\frac{k+1}{2}\right\rfloor + 1},C_{n-k-1}]=  
\]
\[
=\sum_{j=0}^{\left\lfloor\frac{k+1}{2}\right\rfloor}(-1)^{k+2}\alpha_{k+2-j, j}[C_{n+k+2-j}, C_{n-1+j},C_{n-k-1}] + (-1)^{k+2}\alpha_{k+1-\left\lfloor\frac{k+1}{2}\right\rfloor, \left\lfloor\frac{k+1}{2}\right\rfloor}b_{\left\lfloor\frac{k+1}{2}\right\rfloor}=\omega_{k+1}
\]

\ 
\\ \\  Consequently, the identity above is satisfied for any $n,k\in\mathbb{N}$, such that $k\leq n$. If we substitute $k=n$, we will get the original identity.
\end{proof}
\subsection{Additional results}

\begin{Lemma}\label{razn3n}
For any $n\in\mathbb{N}$ the following is satisfied:
\[
\dim L_{3,n}-\dim L_{3,n-1}=\left\lfloor \frac{n-1}{3} \right\rfloor + 1.
\]
\end{Lemma}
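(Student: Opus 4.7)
The plan is to translate the equality into a combinatorial identity via the explicit basis of Lemma \ref{basisL3n}. Writing $\mathcal{B}_n$ for that basis, the dimension $\dim L_{3,n}$ equals the number of triples $(k,l,m)\in\mathbb{N}_0^3$ with $k+l+m=n$, $k>l$, and $k\geq m$, so the task reduces to showing $|\mathcal{B}_n|-|\mathcal{B}_{n-1}|=\lfloor (n-1)/3\rfloor+1$.

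The key step is an injection between these sets. I would define
\[
\phi\colon \{(k,l,m)\in\mathcal{B}_n : m\geq 1\}\longrightarrow \mathcal{B}_{n-1},\qquad (k,l,m)\mapsto (k,l,m-1).
\]
This is well-defined since $k>l$ is untouched and $k\geq m$ gives $k\geq m-1\geq 0$; injectivity is obvious. The image is exactly $\{(k,l,p)\in\mathcal{B}_{n-1}:k>p\}$, because a triple $(k,l,p)\in\mathcal{B}_{n-1}$ lifts under $\phi^{-1}$ iff $(k,l,p+1)\in\mathcal{B}_n$, i.e.\ iff $k\geq p+1$. Counting the complements of the domain and codomain of $\phi$ then yields
\[
\dim L_{3,n}-\dim L_{3,n-1}=\#\{(k,l,0)\in\mathcal{B}_n\}-\#\{(k,l,k)\in\mathcal{B}_{n-1}\}.
\]

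The remaining step is a direct enumeration of these two small sets. Triples $(k,n-k,0)\in\mathcal{B}_n$ are parametrized by integers $k\in(n/2,n]$, giving $\lceil n/2\rceil$ of them. Triples $(k,n-1-2k,k)\in\mathcal{B}_{n-1}$ are parametrized by integers $k$ satisfying $3k>n-1$ and $2k\leq n-1$, giving $\lfloor(n-1)/2\rfloor-\lfloor(n-1)/3\rfloor$ of them (which is $0$ when the range is empty). Invoking the elementary identity $\lceil n/2\rceil-\lfloor(n-1)/2\rfloor=1$ for every $n\geq 1$, a trivial parity check, the difference collapses to $\lfloor(n-1)/3\rfloor+1$, as desired. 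I expect no substantive obstacle: all the structural content sits in choosing the right lifting map $\phi$, and the rest is careful floor/ceiling bookkeeping, including verifying that the small-$n$ degenerate cases line up with the formula.
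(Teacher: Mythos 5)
Your proof is correct. It takes a route that is close in spirit to the paper's but genuinely different in execution: the paper parametrizes the basis of $L_{3,n}$ by the exponent vectors $(i,j,t)$ of Lyndon words $ab^iab^jab^t$ (conditions $i\leq j$, $i<t$) and observes that, in the $(i,j)$-plane, the lattice region for $n-1$ sits inside the region for $n$, so the difference of dimensions is literally the number of lattice points on one boundary segment, counted as $\lfloor (n-1)/3\rfloor+1$ in a single step (with the argument leaning on a figure). You instead work with the $[C_k,C_l,C_m]$ parametrization of Lemma \ref{basisL3n}, where the index sets for $n$ and $n-1$ do not nest, and compensate with the explicit injection $\phi:(k,l,m)\mapsto(k,l,m-1)$; this replaces the single segment count by a difference of two boundary counts, $\lceil n/2\rceil-\bigl(\lfloor (n-1)/2\rfloor-\lfloor (n-1)/3\rfloor\bigr)$, which collapses to the same answer. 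All of your individual counts check out (including the degenerate case $n=1$, where $L_{3,0}=0$ and the second boundary set is empty), and the identification of the image of $\phi$ as $\{(k,l,p):k>p\}$ is exactly right. What your version buys is a fully self-contained, figure-free argument pinned to the basis actually used elsewhere in the paper; what the paper's version buys is brevity, since the nested parametrization makes the difference of dimensions visibly a one-parameter family of new points.
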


\begin{proof}
To calculate this expression, we need to count all Lyndon words of form $ab^{n_1}ab^{n_2}ab^{n_3}$, where $n_1,n_2,n_3\in\mathbb{N}_0  \mbox{ and } {n_1}+{n_2}+{n_3}=n$. Let $n_1=i$ and $n_2=j$, hence $n_3=n-i-j$. 
As it was mentioned before, $ab^{n_1}ab^{n_2}ab^{n_3}$ is a Lyndon word if and only if $n_1\leq n_2$ and $n_1 < n_3$, where $n_1,n_2,n_3\in\mathbb{N}$. We can portray integer points that satisfy these conditions on coordinate plane by drawing plots of functions $y=x$ and $y=n-2x$.
\vspace{-0.6cm}
\begin{center}
\includegraphics[scale=0.94]{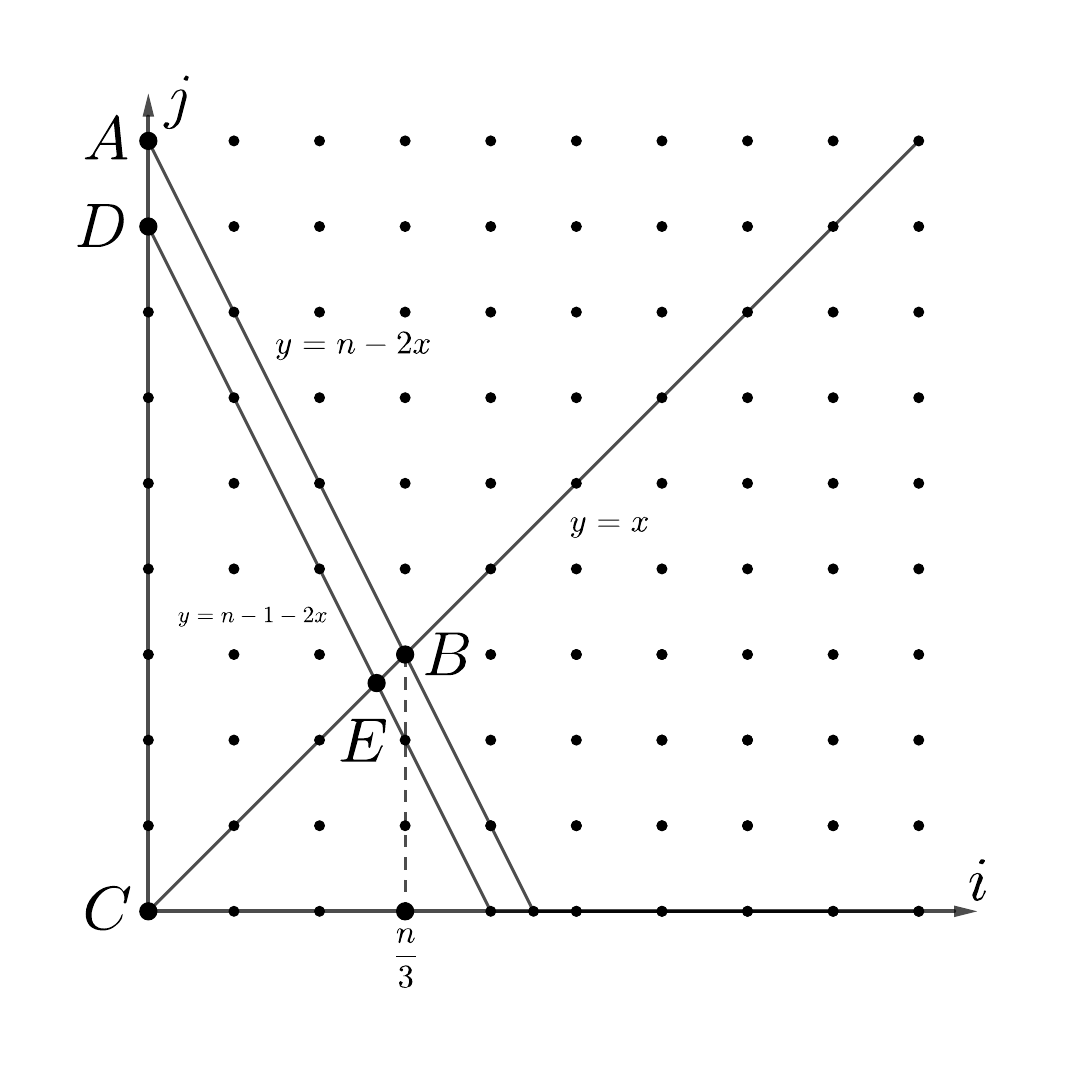}
\end{center}
\vspace{-0.5cm}
Abscissa of functions intersection point is $\frac{n}{3}$. $ab^{i}ab^{j}ab^{n-i-j}$ is a Lyndon word if point $(i,j)$ belongs to $	\bigtriangleup ABC$ (without point on the line $y=n-2x$). Then $\dim L_{3,n}$ equals to number of integer points in $	\bigtriangleup DEC$. Hence $\dim L_{3,n}-\dim L_{3,n-1}$ equals to number of integer points on segment $DE$, i.e. $\left\lfloor \frac{n-1}{3} \right\rfloor + 1$.
\end{proof}

\begin{Proposition}\label{dimI3m}

For any $m\in\mathbb{N}$ the following is satisfied:
\[
\dim I_{3,m}=\left\lceil \frac{m}{2} \right\rceil - \left\lfloor \frac{m-1}{3} \right\rfloor - 1.
\]
\end{Proposition}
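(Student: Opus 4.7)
The proof plan is essentially a dimension-count combining the two preceding lemmas with surjectivity of the bracketing map. Since $I_{3,m}$ is defined as the kernel of
\[
\Theta_{3,m}\colon L_{2,m}\oplus L_{3,m-1}\longrightarrow L_{3,m},
\]
the first step is to observe that $\Theta_{3,m}$ is surjective. This is just the familiar fact (already used in the introduction when computing $\dim I_n$) that the free Lie algebra satisfies $L_{k,l}=[L_{k-1,l},a]+[L_{k,l-1},b]$: every Lie monomial of weight $\ge 2$ can be written as a bracket $[X,y]$ with $y\in\{a,b\}$, and the multidegree of $X$ is forced. Given surjectivity, rank–nullity for the free abelian groups involved yields
\[
\dim I_{3,m}=\dim L_{2,m}+\dim L_{3,m-1}-\dim L_{3,m}.
\]

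Next, I would substitute the dimensions already available. By Lemma \ref{dim2n}, $\dim L_{2,m}=\lceil m/2\rceil$. By Lemma \ref{razn3n}, the difference $\dim L_{3,m}-\dim L_{3,m-1}$ equals $\lfloor (m-1)/3\rfloor+1$. Plugging these in gives
\[
\dim I_{3,m}=\Bigl\lceil \tfrac{m}{2}\Bigr\rceil-\bigl(\dim L_{3,m}-\dim L_{3,m-1}\bigr)=\Bigl\lceil \tfrac{m}{2}\Bigr\rceil-\Bigl\lfloor \tfrac{m-1}{3}\Bigr\rfloor-1,
\]
which is exactly the claimed formula.

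There is essentially no substantive obstacle: the work has been done in Lemmas \ref{dim2n} and \ref{razn3n}, and the only point worth stating carefully is the surjectivity of $\Theta_{3,m}$. If one wanted to make the argument self-contained, one could note that the Lyndon–Shirshov basis element $[ab^{n_1}ab^{n_2}ab^{n_3}]$ of $L_{3,n}$ has a standard factorization whose right factor is either $b$ (when the last bracket closes on a $b$) or a Lyndon word starting with $a$, and in either case unfolding the outermost bracket exhibits the basis element as lying in $[L_{2,n},a]+[L_{3,n-1},b]$; therefore $\Theta_{3,m}$ hits a generating set and is surjective. After this remark the proposition follows by pure arithmetic with floors and ceilings.
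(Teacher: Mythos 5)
Your proposal is correct and follows essentially the same route as the paper: rank--nullity for $\Theta_{3,m}$ combined with Lemma \ref{dim2n} for $\dim L_{2,m}$ and Lemma \ref{razn3n} for the difference $\dim L_{3,m}-\dim L_{3,m-1}$. The only difference is that you spell out the surjectivity of $\Theta_{3,m}$, which the paper leaves implicit (having noted the analogous surjectivity of $\Theta_n$ in the introduction); this is a reasonable addition but not a different argument.
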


\begin{proof}
By definition, $I_{3,m}=\ker\Theta_{3,m}$. Hence, according to lemmas \ref{dimI2n} and \ref{razn3n}, $\dim I_{3,m}=\dim \ker\Theta_{3,m}=\dim (L_{2,m} \oplus L_{3,m-1})-\dim L_{3,m} = \dim L_{2,m} - (\dim L_{3,m} - \dim L_{3,m-1})=\left\lceil \frac{m}{2} \right\rceil - \left\lfloor \frac{m-1}{3} \right\rfloor - 1$.
\end{proof}

\begin{Lemma}\label{razl}
For $k>l$, $k\geq m$ the following is satisfied:
\[
[C_k, C_l, C_m, b]=
\begin{cases}
[C_{k+1}, C_l, C_m] + [C_k, C_{l+1}, C_m] + [C_k, C_l, C_{m+1}], \mbox{if } k>l+1,\: k\geq m+1 \\
[C_{k+1}, C_l, C_m] + [C_k, C_l, C_{m+1}],  \mbox{if } k=l+1,\: k\geq m+1 \\
2[C_{k+1}, C_l, C_m] - [C_{k+1}, C_{l+1}, C_{m-1}], \mbox{if } k=l+1,\: k=m \\
2[C_{k+1}, C_l, C_m] + [C_k, C_{l+1}, C_m] - [C_{k+1}, C_k, C_l], \mbox{if } k>l+1,\: k=m.
\end{cases}
\]
\end{Lemma}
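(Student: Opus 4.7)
The plan is to start from a single ``universal'' expansion and then do case analysis to rewrite offending summands into the Lyndon--Shirshov basis of Lemma~\ref{basisL3n}. Since $[-,b]$ is a derivation of the Lie ring and $[C_i,b]=C_{i+1}$ by definition of $C_i$, applying the Leibniz rule twice to the left-normed bracket gives
\[
[C_k,C_l,C_m,b]\;=\;[C_{k+1},C_l,C_m]\;+\;[C_k,C_{l+1},C_m]\;+\;[C_k,C_l,C_{m+1}],
\]
with no restriction on $k,l,m$. The content of the lemma is purely about bringing the three summands on the right into the canonical form $[C_{k'},C_{l'},C_{m'}]$ with $k'>l'$ and $k'\geq m'$.

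Next I would run through the four cases and inspect each summand. In case $k>l+1,\,k\geq m+1$ all three are already basis elements, giving the first line. If $k=l+1$ then $[C_k,C_{l+1},C_m]=[C_k,C_k,C_m]=0$ by antisymmetry, which kills the middle summand and produces the second and third lines. The only genuine obstruction is the summand $[C_k,C_l,C_{m+1}]$ when $k=m$: then $k\not\geq m+1=k+1$, so it lies outside the basis and must be rewritten. For this I would use antisymmetry together with the cyclic Jacobi identity $[X,Y,Z]+[Y,Z,X]+[Z,X,Y]=0$, which yields
\[
[C_k,C_l,C_{k+1}]\;=\;[C_{k+1},C_l,C_k]\;-\;[C_{k+1},C_k,C_l].
\]
Substituting this into the universal expansion and adding the resulting $[C_{k+1},C_l,C_k]$ to the first summand $[C_{k+1},C_l,C_m]=[C_{k+1},C_l,C_k]$ produces the coefficient~$2$, while $-[C_{k+1},C_k,C_l]$ accounts for the extra term appearing in case~4. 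Case~3 is the same calculation specialized to $l=k-1$, where additionally the middle summand vanishes and the extra term reads $-[C_{k+1},C_k,C_{k-1}]=-[C_{k+1},C_{l+1},C_{m-1}]$.

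The argument is essentially mechanical; the only delicate step is the rewriting of $[C_k,C_l,C_{k+1}]$, and in particular keeping track of the sign coming from the single application of antisymmetry $[C_{k-1},C_{k+1}]=-[C_{k+1},C_{k-1}]$ (resp.\ $[C_l,C_{k+1}]=-[C_{k+1},C_l]$) needed to land inside the basis. Once that sign is pinned down, verifying that each summand of the resulting expression satisfies the Lyndon--Shirshov conditions $k'>l'$, $k'\geq m'$ in each of the four cases is a direct check, and matching coefficients against the statement of the lemma completes the proof.
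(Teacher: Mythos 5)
Your proposal is correct and follows essentially the same route as the paper: expand $[C_k,C_l,C_m,b]$ by the derivation property of $[-,b]$, kill the middle term by antisymmetry when $k=l+1$, and rewrite the offending summand $[C_k,C_l,C_{k+1}]$ via the Jacobi identity as $[C_{k+1},C_l,C_k]-[C_{k+1},C_k,C_l]$ in the cases $k=m$. The signs and coefficients you obtain match the paper's computation exactly.
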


\begin{proof}
It is easy to rewrite the expression in the first case using Jacobi identity:
\[
[C_k, C_l, C_m, b]=[C_k,C_l,b,C_m]+[C_k,C_l,[C_m,b]]
=[C_{k+1}, C_l, C_m] + [C_k, C_{l+1}, C_m] + [C_k, C_l, C_{m+1}].
\]
Second case:
\[
[C_{k}, C_l, C_m, b]=[C_{l+2}, C_l, C_m] + [C_{l+1}, C_{l+1}, C_m] + [C_{l+1}, C_l, C_{m+1}]=[C_{k+1}, C_l, C_m] +  [C_{k}, C_l, C_{m+1}].
\]
Third case:
\[
[C_k, C_l, C_m, b]=[C_{k+1}, C_l, C_{m}] + [C_{k}, C_l, C_{m+1}]=[C_{k+1}, C_l, C_m]+[C_k,C_{m+1},C_l]+[C_{m+1},C_l,C_k]=
\]
\[
=2[C_{k+1},C_l,C_m]-[C_{k+1},C_{l+1},C_{m-1}].
\]
Fourth case:
\[
[C_k, C_l, C_m, b]
=[C_{k+1}, C_l, C_m] + [C_k, C_{l+1}, C_m] + [C_k, C_l, C_{m+1}]= [C_{k+1}, C_l, C_m] + [C_k, C_{l+1}, C_m] + 
\]
\[
+[C_k,C_{m+1},C_l]-[C_l,C_{m+1},k]=[C_{k+1}, C_l, C_m] + [C_k, C_{l+1}, C_m] + [C_k,C_{m+1},C_l]-[C_l,C_{m+1},k]=
\]
\[
=2[C_{k+1}, C_l, C_m] + [C_k, C_{l+1}, C_m] - [C_{k+1}, C_k, C_l].
\]
\end{proof}
\begin{Theorem}\label{I33}
The kernel of $\Theta_{3,3}$ is generated by the following element: 
\[
(3[C_2, C_1]+ 2[C_3,C_0], [C_1,C_0,C_1] - 2[C_2,C_0,C_0] ).
\] 
\end{Theorem}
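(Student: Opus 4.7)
The strategy is to combine the dimension count from Proposition \ref{dimI3m} with a direct verification. Since $\dim I_{3,3} = \lceil 3/2\rceil - \lfloor 2/3\rfloor - 1 = 1$, the kernel is an infinite cyclic subgroup of the free abelian group $L_{2,3}\oplus L_{3,2}$. Hence to prove the theorem it suffices to establish two facts about the element $v = (3[C_2,C_1]+2[C_3,C_0],\ [C_1,C_0,C_1]-2[C_2,C_0,C_0])$: first, that $\Theta_{3,3}(v)=0$, and second, that $v$ is a primitive vector in the ambient free abelian group, so that it must in fact generate $I_{3,3}$.

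For the membership check I will expand $\Theta_{3,3}(v)$ in the Lyndon--Shirshov basis of $L_{3,3}$, which by Lemma \ref{basisL3n} is $\{[C_3,C_0,C_0],\ [C_2,C_1,C_0],\ [C_2,C_0,C_1]\}$. The bracket with $b$ on the second coordinate is handled directly by Lemma \ref{razl}: the triple $[C_1,C_0,C_1]$ falls into the third case ($k=l+1,\ k=m$) and yields $2[C_2,C_0,C_1]-[C_2,C_1,C_0]$, while $[C_2,C_0,C_0]$ falls into the first case and yields $[C_3,C_0,C_0]+[C_2,C_1,C_0]+[C_2,C_0,C_1]$, so the combined $b$-contribution is $-2[C_3,C_0,C_0]-3[C_2,C_1,C_0]$. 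On the first coordinate, $[[C_3,C_0],a]$ is already the basis vector $[C_3,C_0,C_0]$. The only nontrivial piece is $[[C_2,C_1],a]$, which I will reduce by two applications of the Jacobi identity $[[x,y],z]=[[x,z],y]+[x,[y,z]]$: first getting $[C_2,C_0,C_1]+[C_2,[C_1,C_0]]$, and then resolving $[C_2,[C_1,C_0]]$ as $[C_2,C_1,C_0]-[C_2,C_0,C_1]$, so that in total $[[C_2,C_1],a]=[C_2,C_1,C_0]$. The $a$-contribution is therefore $2[C_3,C_0,C_0]+3[C_2,C_1,C_0]$, which exactly cancels the $b$-contribution.

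For primitivity, observe that by Lemma \ref{basL2n} the first coordinate of $v$ is expressed in the basis $\{[C_3,C_0],[C_2,C_1]\}$ of $L_{2,3}$ with coefficient vector $(2,3)$, and $\gcd(2,3)=1$. Thus if $e$ is any generator of the rank one group $I_{3,3}$, then $v=ne$ for some integer $n$, and primitivity forces $n=\pm 1$; so $v$ itself is a generator of $I_{3,3}$.

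The main technical point is the Jacobi reduction of $[[C_2,C_1],a]$ to the Lyndon basis, since Lemma \ref{razl} describes the action of $\mathrm{ad}\,b$ on triple commutators in $L_{3,m}$ but not the action of $\mathrm{ad}\,a$ on $L_{2,m}$. For $m=3$ this ad hoc calculation collapses to a single basis vector, keeping the remaining arithmetic in the three-dimensional space $L_{3,3}$ very short; a systematic replacement for this step would be the natural next task if one wished to extend the method to larger $m$.
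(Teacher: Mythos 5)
Your proof is correct, but it takes a genuinely different (and in one respect more careful) route than the paper's. For membership, the paper simply specializes Theorem \ref{3aId} to $n=1$ and reads off the kernel element, whereas you verify $\Theta_{3,3}(v)=0$ by a direct computation in the Lyndon--Shirshov basis of $L_{3,3}$ using Lemma \ref{razl}; your route is self-contained and does not lean on the inductive machinery of the main theorem, at the cost of a short explicit calculation (which checks out: the $b$-contribution is $-2[C_3,C_0,C_0]-3[C_2,C_1,C_0]$ and the $a$-contribution is its negative). Incidentally, what you call the ``main technical point'' is a non-issue: since $a=C_0$ and brackets are left-normed, $[[C_2,C_1],a]$ is literally the basis element $[C_2,C_1,C_0]$ of Lemma \ref{basisL3n}, so your two Jacobi applications just cancel back to a tautology. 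More importantly, your primitivity argument --- the first coordinate of $v$ has coefficients $2$ and $3$ with $\gcd(2,3)=1$, so $v$ cannot be a proper multiple of any vector in the ambient lattice, hence must generate the rank-one group $I_{3,3}$ --- supplies a step the paper's proof omits entirely: the paper passes from $\dim I_{3,3}=1$ and the existence of one nonzero kernel element directly to the claim that this element generates $I_{3,3}$, which is not automatic for subgroups of free abelian groups (a nonzero element of a rank-one group need not generate it, as $2\mathbb{Z}\subsetneq\mathbb{Z}$ shows). In this respect your write-up closes a genuine gap in the paper's argument.
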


\begin{proof}
According to lemma \ref{dimI3m} $\dim I_{3,3}=\left\lceil \frac{3}{2} \right\rceil - \left\lfloor \frac{2}{3} \right\rfloor - 1=1$. Consequently, we have to provide only one identity to describe the whole $I_{3,3}$. Substitute $n=1$ into identity from theorem \ref{3aId}:
\[
[\alpha_{2,0}[C_3,C_0] + \alpha_{1,1}[C_2,C_1],a]=[-\alpha_{0,0}[C_1,C_0,C_1]+\alpha_{1,0}[C_2,C_0,C_0],b].
\]
By definition of $\alpha_{i,j}$, $\alpha_{2,0}=2, \: \alpha_{1,1}=3,\: \alpha_{0,0}=1$ and $\alpha_{1,0}=2$. We can move right part of the equality to the left side and it will become an image of the element from $L_{2,3} \oplus L_{3,2}$:
\[\hspace{-0.5cm}
[2[C_3,C_0] + 3[C_2, C_1],a] + [[C_1,C_0,C_1] - 2[C_2,C_0,C_0],b] = \Theta_{3,3}(2[C_3,C_0]+3[C_2, C_1] , [C_1,C_0,C_1] - 2[C_2,C_0,C_0])= 0
\] As a result, we obtained the element that generates all identities in $L_{3,3}$ that is equivalent to description of $I_{3,3}$.
\end{proof}

\begin{Theorem}\label{th_I36}
The abelian group $I_{3,6}$ is generated by the following element  
\[(-2[C_5,C_1]-5[C_4,C_2]\ ,\  2[C_4,C_1,C_0]+3[C_3,C_2,C_0]-2[C_3,C_1,C_1]+[C_2,C_1,C_2] )
\] 
\end{Theorem}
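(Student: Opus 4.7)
My plan is to produce a specific element of $I_{3,6}$ from the $n=2$ case of Theorem \ref{3aId}, and then argue that it generates the kernel by combining a dimension count with a primitivity check. First, I would invoke Proposition \ref{dimI3m} at $m=6$ to get
\[
\dim I_{3,6}=\left\lceil \tfrac{6}{2}\right\rceil - \left\lfloor \tfrac{5}{3}\right\rfloor - 1 = 3-1-1 = 1,
\]
so $I_{3,6}$ is a rank-one free abelian subgroup of $L_{2,6}\oplus L_{3,5}$. It is therefore enough to produce an element of $I_{3,6}$ that is primitive in the ambient group: any such element automatically generates the rank-one pure (hence saturated) subgroup $I_{3,6}$.

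Second, I would specialize Theorem \ref{3aId} at $n=2$. The only inputs needed are the six coefficients $\alpha_{0,0}=1$, $\alpha_{1,0}=\alpha_{2,0}=\alpha_{3,0}=2$, $\alpha_{1,1}=3$, and $\alpha_{2,1}=5$, all read off the closed form (or obtained from Lemma \ref{recur}). Expanding the two outer sums and tracking the signs $(-1)^{n+1}$ and $(-1)^{i+1}$, the identity in Theorem \ref{3aId} takes the form
\[
\bigl[-2[C_5,C_1]-5[C_4,C_2],\,a\bigr] = \bigl[-[C_2,C_1,C_2]+2[C_3,C_1,C_1]-2[C_4,C_1,C_0]-3[C_3,C_2,C_0],\,b\bigr].
\]
Rearranging this as $[A,a]+[-B,b]=0$ shows that the pair displayed in the statement of Theorem \ref{th_I36} is precisely $(A,-B)\in\ker\Theta_{3,6}=I_{3,6}$.

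Third, I would verify primitivity. The bases of $L_{2,6}$ and $L_{3,5}$ supplied by Lemma \ref{basL2n} and Lemma \ref{basisL3n} contain the commutators $[C_5,C_1]$, $[C_4,C_2]$, $[C_4,C_1,C_0]$, $[C_3,C_2,C_0]$, $[C_3,C_1,C_1]$, and $[C_2,C_1,C_2]$ among their elements, so $(A,-B)$ is already written in basis coordinates. Its nonzero entries are $-2,-5,2,3,-2,1$, whose $\gcd$ is $1$; hence $(A,-B)$ is primitive, and by the previous paragraph it generates $I_{3,6}$.

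The only real difficulty is careful arithmetic bookkeeping: one must correctly evaluate the six relevant $\alpha_{i,j}$, correctly expand the double sum on the right-hand side of Theorem \ref{3aId}, and remember to flip signs when converting the equality $[A,a]=[B,b]$ into a kernel element of $\Theta_{3,6}$. Beyond this there is no genuinely new content, so I expect the whole argument to read as a short corollary of Theorem \ref{3aId} at $n=2$ combined with the rank count from Proposition \ref{dimI3m}.
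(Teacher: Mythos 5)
Your proposal is correct and follows essentially the same route as the paper: compute $\dim I_{3,6}=1$ from Proposition~\ref{dimI3m}, specialize Theorem~\ref{3aId} at $n=2$, and exhibit the resulting pair as an element of $\ker\Theta_{3,6}$. The one place where you go beyond the paper is the primitivity check: the paper stops at ``the rank is one and here is a nonzero element,'' which by itself only shows the element generates a finite-index subgroup of $I_{3,6}$. Your extra step --- observing that $I_{3,6}$ is pure in $L_{2,6}\oplus L_{3,5}$ (being the kernel of a map into the free abelian group $L_{3,6}$), that the six commutators appearing are genuine basis elements by Lemmas~\ref{basL2n} and~\ref{basisL3n}, and that the coordinates $-2,-5,2,3,-2,1$ have $\gcd 1$ --- is exactly what is needed to upgrade this to generation of all of $I_{3,6}$, so your argument is in fact slightly more complete than the one in the paper.
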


\begin{proof}
Similarly to proof of the theorem \ref{I33}. $\dim I_{3,6}=\left\lceil \frac{6}{2} \right\rceil - \left\lfloor \frac{5}{3} \right\rfloor - 1=1$. Substitute $n=2$ into identity from theorem \ref{3aId}:
\[
[-\alpha_{3,0}[C_5,C_1]-\alpha_{2,1}[C_4,C_2], a]=[-\alpha_{0,0}[C_2,C_1,C_2]+\alpha_{1,0}[C_3,C_1,C_1]-\alpha_{2,0}[C_4,C_1,C_0]-\alpha_{1,1}[C_3,C_2,C_0],b].
\]
Coefficients will be $\alpha_{3,0}=2, \: \alpha_{2,1}=5, \: \alpha_{0,0}=1, \: \alpha_{1,0}=2, \: \alpha_{2,0}=2$ and $\alpha_{1,1}=3$. Again, we've found an element of $L_{2,6} \oplus L_{3,5}$ that generates all possible identities. Coefficients in the right part will be multiplied by $-1$ because of moving to the left side. 
\end{proof}


\begin{thebibliography}{99}
\bibitem{Qbad} Sergei O. Ivanov, Roman Mikhailov: A finite $\mathbb{Q}$--bad space. arXiv:1708.00282
\bibitem{Reutenauer} C. Reutenauer: Free Lie algebras, Oxford University Press, 1993
\end{thebibliography}
\end{document}